\numberwithin{equation}{section}
\theoremstyle{definition}
\newtheorem{theorem}{Theorem}[section]
\newtheorem{proposition}[theorem]{Proposition}
\newtheorem{lemma}[theorem]{Lemma}
\newtheorem{corollary}[theorem]{Corollary}
\newtheorem{definition}[theorem]{Definition}
\newtheorem{example}[theorem]{Example}
\newtheorem{remark}[theorem]{Remark}
\def\Gwa{\mathbf{Gr}^{\bullet}}
\def\rGwa{\mathbf{rGr}^{\bullet}}
\def\Gr{\mathbf{Gr}}
\def\PA{\mathsf{Pentact}}
\def\USGA{\mathsf{USGA}}
\def\St{\mathsf{Stab}}
\def\wSt{\mathsf{wStab}}
\def\Ann{\mathsf{Ann}}
\def\AR{\mathbb{AR}}
\def\bb{\mathbb{b}}
\def\bbz{\mathbb{0}}
\def\Act{\operatorname{Actor}}
\def\oa{\overline{a}}
\title{\textbf{Pentactions and action representability in the category of reduced groups with action}}
\author[a]{Tamar Datuashvili\thanks{\textbf{Corresponding author: }\texttt{tamar.datu@gmail.com} (T. Datuashvili)}}
\author[b]{Tunçar Şahan}
\affil[a]{\small{A. Razmadze Mathematical Institute of I. Javakhishvili Tbilisi State University, 6 Tamarashvii Str., Tbilisi 0177, Georgia}}
\affil[b]{\small{Department of Mathematics, Aksaray University, Aksaray, Turkey}}
\date{}
\begin{document}
\maketitle

\begin{abstract}
A notion of pentaction of any object in the category $\rGwa$ of reduced groups with action is introduced. The operations are defined in the set $\PA(A)$ of pentactions of an object $A$ of $\rGwa$. It is proved that if an object $A$ is perfect with zero weak stabilizer in the sense defined in the paper, then $\PA(A)$ is an object of $\rGwa$, it has a derived action on $A$, the object $A$ is action representable and $\PA(A)$ represents all actions on $A$.\\[0.4cm]
{\small \textbf{Mathematics Subject Classification (2020):} 08A99, 08C05, 22F05.}\\[0.1cm]
{\small \textbf{Keywords:} Group with action, pentaction, action representability}
\end{abstract}

\section{Introduction}

Reduced groups with action were introduced in \cite{Datuashvili2021} as a certain full subcategory $\rGwa$ of the category of groups with action, which was defined in \cite{Datuashvili2002a,Datuashvili2004} and played a crucial role in the solution of Loday's two problems stated in \cite{Loday1993,Loday2003}. The aim of the definition of $\rGwa$ was investigation of representability of actions in this category. The notion of representable action was introduced in \cite{Borceux2005} in terms of monoidal categories and applied to semi-abelian categories. In \cite{Casas2007,Casas2010} was developed action theory in categories of interest in the sense of \cite{Orzech1972,Orzech1972a}, where was given a definition and construction of universal strict general actor $\USGA(A)$ of any object $A$ in the category of interest and found its properties. This construction and the corresponding results were applied to various cases of algebraic categories in order to prove the existence of universal acting objects, called actor, under appropriate conditions and to give their constructions \cite{Atik2017,Boyaci2015,Casas2007,Casas2010,Casas2009,Casas2009a,Casas2013,Casas2012,Casas2019}. The notion of actor is equivalent to split extension classifier in semi-abelian categories \cite{Borceux2005}. The category $\Gwa$ is not a category of interest, therefore we found its subcategory $\rGwa$, which is also not a category of interest, but it has some properties, which are similar to those which are important in the construction of $\USGA(A)$ of an object $A$. In this paper we apply the results obtained in \cite{Casas2010} and \cite{Datuashvili2021}, and introduce a notion of pentactions of $A$ in $\rGwa$. It is proved that if an object $A$ is perfect with zero weak stabilizer in the sense defined in the paper, then pentactions of $A$ form an object of $\rGwa$, denoted by $\PA(A)$, $A$ is action representable and $\PA(A)$ represents all actions on $A$.

In Section \ref{sec:prelim} we recall the definitions of the categories $\Gwa$ and $\rGwa$, derived action in $\rGwa$ and describe its properties. We give a definition of representable action in $\rGwa$ and of action representer, which agrees with the definition given in \cite{Borceux2005} in more general setting.

In Section \ref{sec:pentact} we define a notion of pentaction of any object $A\in\rGwa$. The notion of perfect objects of $\rGwa$ is introduced and an example is given. This notion is an analogue of perfect algebras in the categories of associative and Leibniz algebras, respectively. This property is a sufficient condition for the existence of an actor for the objects in these categories \cite{Casas2010,Datuashvili2017}. Then we introduce notions of stabilizer and weak stabilizer of an object $A\in\rGwa$ denoted by $\St(A)$ and  $\wSt(A)$, respectively. The prototype of stabilizer of $A$ is annihilator in the case of algebras; recall that $\Ann(A)=0$ is a sufficient condition for the existence of actor in the categories of associative and Leibniz algebras (see e.g. \cite{Casas2010,Datuashvili2017}). We give an example of an object of $\rGwa$ which is perfect with zero weak stabilizer. It is proved that if $A$ is perfect with $\wSt(A)=0$, then $\PA(A)$ is an object of $\rGwa$ (Theorem \ref{theo:pentactrgwa}).

In Section \ref{sec:actrep} we define an action of $\PA(A)$ on $A$ and prove that if $A$ is perfect with $\wSt(A)=0$, then this action is a derived action in $\rGwa$ (Theorem \ref{theo:pentactderact}). At the end of this section it is proved that under the same conditions on the object $A$, $A$ has representable actions and $\PA(A)$ represents all actions on $A$ in $\rGwa$ (Theorem \ref{theo:pentactrepresent}).

\section{Preliminary definitions and results}\label{sec:prelim}

The category $\Gwa$ of groups with action is defined in \cite{Datuashvili2002a} as a category, where objects are groups which act on itself. Morphisms in this category are group homomorphisms, which preserve an action operation, i.e. $\alpha\colon G\rightarrow G'$ is a group homomorphism with the property $\alpha({g}^{h})={\alpha(g)}^{\alpha(h)}$, where $G,G'\in\Gwa$ and $g,h\in G$. The group operation is denoted additively, but it is not commutative in general. Recall that an action $G'\times G\rightarrow G$ in the category $\Gr$ of groups satisfies the following conditions 
\begin{alignat*}{2}
	{(g+g')}^{h} & = {g}^{h}+{g'}^{h} \\
	{g}^{h+h'} & = {\left({g}^{h}\right)}^{h'} \\
	{g}^{0} & = g
\end{alignat*}
for any $G,G'\in\Gr$ and any $g,g'\in G$, $h,h'\in G'$, where 0 is the zero element of $G$. Note that from the first condition it follows that ${0}^{h}=0$, for any $h\in G'$.

A category $\rGwa$ of reduced groups with action is introduced in \cite{Datuashvili2021} as a full subcategory of $\Gwa$ of those objects $G$, which has the following two properties
\begin{alignat*}{2}
	{x}^{y}+z & = z+{x}^{y} \,\, (y\neq 0) \\
	{x}^{\left({y}^{z}\right)} & = {x}^{y} 
\end{alignat*}
for any $x,y,z\in G$.

Note that a group with action on itself by conjugation or with trivial action is not an object of $\rGwa$. Any abelian group with trivial action is an object of $\rGwa$.

The category $\Gwa$ is not a category of interest. In \cite{Datuashvili2021} we modified for this category the definition of derived action due to split extensions known for the category of interest \cite{Orzech1972a} or category of groups with operations \cite{Porter1987}. In analogous way derived action is defined in $\rGwa$. This definition  derived from split extension agrees with the definition of action in a semi-abelian category \cite{Borceux2005}.

Let $A,B\in\rGwa$. An extension of $B$ by $A$ is a sequence 
\begin{equation}\label{ses}
	\xymatrix{0 \ar[r] &   A \ar@{->}[r]^-{i} &   E \ar@{->}[r]^-{p} &   B \ar[r] & 0}
\end{equation}
in which $p$ is surjective and $i$ is the kernel of $p$. It is said that an extension is split if there is a morphism $j\colon B\rightarrow E$, with $pj=1_B$. Below $i(a)$ will be identified with $a$. A split extension induces a triple of action of $B$ on $A$ corresponding to the operations of addition, action and its dual operation in $\rGwa$. From the split extension \eqref{ses} for any $b\in B$ and $a\in A$, actions are defined by
\begin{equation}\label{deract}
	\begin{alignedat}{2}
		b\cdot a &= j(b)+a-j(b) \\
		{b}^{a} &={j(b)}^{a}-j(b) \\
		{a}^{b} &={a}^{j(b)}. 
	\end{alignedat}
\end{equation}
Actions defined by \eqref{deract} are called derived actions of $B$ on $A$ as it is in the case of groups with operations or category of interest. The second action in \eqref{deract} differs from what we have in the noted known cases, since ${j(b)}^{0}\neq 0$ in $E$.

Suppose we have a triple of functions $B\times A\rightarrow A$, which one can call action of $B$ on $A$ denoted by $b\cdot a$, ${a}^{b}$ and ${b}^{a}$ for any $a\in A$ and $b\in B$. According to Theorem 4.2 in \cite{Datuashvili2021}, this triple is a derived action in $\rGwa$ if and only if it satisfies group action conditions for the addition operation and also the conditions:
\begin{enumerate}[leftmargin=1.5cm]
	\item[$\boldsymbol{(1_{A})}$]\label{prop:act1a} ${(a+a')}^{b}={a}^{b}+{(a')}^{b}$,
	\item[$\boldsymbol{(2_{A})}$]\label{prop:act2a} ${\left(b+b'\right)}^{a}={b}^{a}+b\cdot\left((b')^{a}\right)$,
	\item[$\boldsymbol{(3_{A})}$]\label{prop:act3a} ${(b\cdot a)}^{a'}= {a}^{a'}$ for $a'\neq 0$,
	\item[$\boldsymbol{(4_{A})}$]\label{prop:act4a} $\left(b\cdot a\right)^{b'}={a}^{b'}$,
	\item[$\boldsymbol{(1_{B})}$]\label{prop:act1b} ${b}^{(a+a')}=\left({b}^{a}\right)^{a'}+{b}^{a'}$,
	\item[$\boldsymbol{(2_{B})}$]\label{prop:act2b} ${a}^{b+b'}=\left({a}^{b}\right)^{b'}$,
	\item[$\boldsymbol{(3_{B})}$]\label{prop:act3b} $\left({a}^{(b\cdot a')}\right)^{b}=\left({a}^{b}\right)^{a'}$,
	\item[$\boldsymbol{(4_{B})}$]\label{prop:act4b} $\left({b}^{(b'\cdot a)}\right)^{b'}=\left({b}^{b'}\right)^{a}$,
\end{enumerate}
the condition ${a}^{0_{B}}=a$ for any $a\in A$, where $0_B$ is the zero element of $B$, and the conditions 
\renewcommand{\arraystretch}{1.43}
\begin{equation*}\label{tennew}
	\begin{array}{rrclcrrcl}
		a_1. & b\cdot{a}^{a'}              & = & {a}^{a'} \text{ for } a'\neq 0  & \qquad\qquad\qquad & a_6.    & {a}^{b}+a'                  & = & a'+ {a}^{b} \text{ for } b\neq 0 \\
		a_2. & b\cdot{a}^{b'}              & = & {a}^{b'}  \text{ for } b'\neq 0 & \qquad\qquad\qquad & a_7.    & {a}^{\left({a'}^{b}\right)} & = & {a}^{a'}                         \\
		a_3. & {b}^{b'}\cdot a             & = & a  \text{ for } b'\neq 0        & \qquad\qquad\qquad & a_8.    & {a}^{\left({b}^{a'}\right)} & = & a  \text{ for } a'\neq 0         \\
		a_4. & {b}^{\left({a}^{a'}\right)} & = & {b}^{a}                         & \qquad\qquad\qquad & a_9.    & {b}^{\left({b'}^{a}\right)} & = & 0                                \\
		a_5. & {a}^{\left({b}^{b'}\right)} & = & {a}^{b}                         & \qquad\qquad\qquad & a_{10}. & {b}^{\left({a}^{b'}\right)} & = & {b}^{a}
	\end{array}	
\end{equation*}
for any $a,a'\in A$, $b,b'\in B$. Note that $\boldsymbol{(3_A)}$ and $\boldsymbol{(4_A)}$ have simpler forms in the category $\rGwa$, than they have it in the category $\Gwa$ \cite[Proposition 3.1]{Datuashvili2021} and it is noted in \cite[Theorem 4.2 (2)]{Datuashvili2021} as well. Here we applied the conditions $a_1$ and $a_{3}$ and the property of any object of $\rGwa$ ${x}^{y}+z  = z+{x}^{y}$, $x,y,z\in G$, $G\in\rGwa$.

As we have noted in the introduction representable action was introduced in monoidal categories in \cite{Borceux2005}, we give this definition for the special case in the category $\rGwa$.

\begin{definition}
	We will say that an object $A$ of $\rGwa$ has representable action in this category, if there is an object $R$ with a derived action on $A$ with the property that for any object $B$ of $\rGwa$ with a derived action on $A$ there exists a unique homomorphism in $\rGwa$
	\begin{equation*}
		\varphi\colon B\rightarrow R
	\end{equation*}
	with the property that
	\begin{equation}\label{repact}
		\begin{alignedat}{2}
			b\cdot a &= \varphi(b)\cdot a \\
			{b}^{a} &= {\varphi(b)}^{a} \\
			{a}^{b} &={a}^{\varphi(b)}
		\end{alignedat}
	\end{equation}
	for any $b\in B$ and $a\in A$. On the right side of \eqref{repact} we mean the action of $R$ on $A$.
\end{definition}

We will call the object $R$ action representer of $A$ and denote by $\AR(A)$. It is obvious that if such an object exists for a given $A$, then it is unique object up to isomorphism with this property. The notion of $\AR(A)$ is the same as the notion of actor of the object $A$, denoted by $\Act(A)$, in the case of category of interest \cite{Casas2007,Casas2010} and it is equivalent to the special case of the notion of split extension classifier for the object defined in semi-abelian categories in pure categorical way \cite{Borceux2005}.

\section{Pentactions in $\rGwa$}\label{sec:pentact}

Let $A\in\rGwa$. Consider the functions
\begin{equation*}
	b\cdot \text{-} ~,~ \text{-}\cdot b ~,~ {(\text{-})}^{b} ~,~ {\vphantom{(-)}}^{b}{(\text{-})} ~,~ {b}^{(\text{-})} \colon A\rightarrow A.
\end{equation*}

The values of these functions will be denoted by $b\cdot a, a\cdot b, {a}^{b}, {}^{b}{a}, {b}^{a}$ for any $a\in A$.

\begin{definition}\label{def:pentcond}
	We will say that $\bb=\left(b\cdot \text{-},\text{-}\cdot b,{(\text{-})}^{b},{\vphantom{
			(-)}}^{b}{(\text{-})},{b}^{(\text{-})}\right)$ is a pentaction of $A$ if the following conditions are satisfied, where $a,a'\in A$.
	
	\setstretch{1.5}
	\begin{shortenumerate}[aaaaaaaaaaaaaaaaaaaaaaaaaaaaaaaaaaaaaaaaa]
		\item[1.] $b\cdot(a+a')=b\cdot a+b\cdot a'$
		\item[1$^{\circ}$.] $(a+a')\cdot b = a\cdot b+a'\cdot b$
		\item[2.] ${(a+a')}^{b}={a}^{b}+{a'}^{b}$ \label{cond:2pentact}
		\item[2$^{\circ}$.] ${}^{b}{(a+a')}={}^{b}{a}+{}^{b}{a'}$ \label{cond:2cpentact}
		\item[3.] ${(b\cdot a)}^{a'}={a}^{a'}$ for any $a'\neq 0$
		\item[3$^{\circ}$.] ${(a\cdot b)}^{a'}={a}^{a'}$ for any $a'\neq 0$
		\item[4.] ${b}^{(a+a')}={\left({b}^{a}\right)}^{a'}+{b}^{a'}$
		\item[]
		\item[5.] ${\left({a}^{b\cdot a'}\right)}^{b}={\left({a}^{b}\right)}^{a'}$
		\item[5$^{\circ}$.] ${\vphantom{\left({}^{a'\cdot b}{a}\right)}}^{b}{\left({}^{a'\cdot b}{a}\right)}={\vphantom{\left({}^{a'\cdot b}{a}\right)}}^{a'}{\left({}^{b}{a}\right)}$
		\item[6.] $b\cdot {a}^{a'}={a}^{a'}$ for any $a'\neq 0$
		\item[6$^{\circ}$.] ${a}^{a'}\cdot b={a}^{a'}$ for any $a'\neq 0$
		\item[7.] ${b}^{\left({a}^{a'}\right)}={b}^{a}$
		\item[]
		\item[8.] \parbox[t]{0.8\shortitemwidth}{${a}^{b}+a'=a'+{a}^{b}$, if ${a}^{b}\neq a$ at least for\\[-2mm] one $a\in A$\\[-5mm]}
		\item[8$^{\circ}$.] \parbox[t]{0.8\shortitemwidth}{${}^{b}{a}+a'=a'+{}^{b}{a}$, if ${}^{b}{a}\neq a$ at least for\\[-2mm] one $a\in A$\\[-5mm]}
		\item[9.] ${a}^{\left({a'}^{b}\right)}={a}^{a'}$
		\item[9$^{\circ}$.] ${a}^{\left({}^{b}{a'}\right)}={a}^{a'}$
		\item[10.] ${a}^{\left({b}^{a^{\prime}}\right)}=a$ for any $a'\neq 0$
		\item[]
		\item[11.] $(b\cdot a)\cdot b=b\cdot (a\cdot b)=a$
		\item[]
		\item[12.] ${\vphantom{\left({}^{b}{a}\right)}}^{b}{\left({a}^{b}\right)}={\left({}^{b}{a}\right)}^{b}=a$
		\item[]
	\end{shortenumerate}
	\setstretch{1.1}
\end{definition}

Denote the set of all pentactions of $A$, $A\in\rGwa$, by $\PA(A)$.

Note that it follows from conditions $2.$ and $2^{\circ}.$ that ${0}^{b}={}^{b}{0}=0$, and it follows from $4$ that ${b}^{0}=0$, for any ${(\text{-})}^{b},{\vphantom{(\text{-})}}^{b}{(\text{-})}$ and ${b}^{(\text{-})}\in\bb$, $\bb\in\PA(A)$.

Consider the set of functions
\begin{equation*}
	\bbz=\left(0\cdot \text{- },\text{-}\cdot 0,{(\text{-})}^{0},{\vphantom{(\text{-})}}^{0}{(\text{-})},{0}^{(\text{-})}\right),
\end{equation*}
where $0\cdot a = a$, $a\cdot 0=a$, ${a}^{0}={}^{0}{a}=a$, ${0}^{a}=0$. It is easy to see that $\bbz$ is a pentaction; conditions $8$ and $8^{\circ}$ hold for $\bbz$, since by definition ${a}^{0}={}^{0}{a}=a$ for any $a\in A$.

For any $\bb,\bb'\in\PA(A)$ define a new set of functions, denoted by 

\begin{equation*}
	\bb+\bb'=\left(\left(b+b'\right)\cdot \text{-}~,\text{-}\cdot \left(b+b'\right),{(\text{-})}^{\left(b+b'\right)},{\vphantom{(\text{-})}}^{\left(b+b'\right)}{(\text{-})},{\left(b+b'\right)}^{(\text{-})}\right),
\end{equation*}
where by definition for any $a\in A$
\begin{equation}\label{eq:bpb}
	\begin{alignedat}{2}
		\left(b+b'\right)\cdot a &= b\cdot (b'\cdot a) \\
		a\cdot \left(b+b'\right) &= (a\cdot b)\cdot b'\\ 
		{a}^{\left(b+b'\right)} &= {\left({a}^{b}\right)}^{b'}\\
		{\vphantom{a}}^{\left(b+b'\right)}{a} &= {\vphantom{\left({\vphantom{a}}^{b'}{a}\right)}}^{b}{\left({\vphantom{a}}^{b'}{a}\right)}\\
		{\left(b+b'\right)}^{a} &= {b}^{a} + b\cdot\left({b'}^{a}\right).
	\end{alignedat}
\end{equation}

For any $\bb,\bb'\in\PA(A)$ consider a new set of functions 
\begin{equation*}
	{\bb}^{\bb'}=\left(\left({b}^{b'}\right)\cdot \text{- },\text{-}\cdot \left({b}^{b'}\right),{(\text{-})}^{\left({b}^{b'}\right)},{\vphantom{(\text{-})}}^{\left({b}^{b'}\right)}{(\text{-})},{\left({b}^{b'}\right)}^{(\text{-})}\right),
\end{equation*}
defined by 
\begin{equation}\label{eq:btob}
	\begin{alignedat}{2}
		\left({b}^{b'}\right)\cdot a &= a \\
		a\cdot \left({b}^{b'}\right) &= a\\ 
		{a}^{\left({b}^{b'}\right)} &= {a}^{b}\\
		{\vphantom{a}}^{\left({b}^{b'}\right)}{a} &= {\vphantom{a}}^{b}{a}\\
		{\left({b}^{b'}\right)}^{a} &= {\left({b}^{\left(b'\cdot a\right)}\right)}^{b'},
	\end{alignedat}
\end{equation}
for any $a\in A$.

\begin{definition}
	An object $A$ of the category $\rGwa$ will be called perfect if ${A}^{A}=A$.
\end{definition}

Here under ${A}^{A}$ is meant the subobject of $A$ in $\rGwa$ generated by the elements of the type ${a}^{a'}$, $a,a'\in A$.

\begin{example}\label{ex:abeltriv}
	Any abelian group with trivial action is perfect in $\rGwa$.
\end{example}

\begin{example}
	For any $A\in\rGwa$, the object ${A}^{A}$ is a perfect object. We have to show only that ${A}^{A}\subseteq {\left({A}^{A}\right)}^{\left({A}^{A}\right)}$. Any element of ${A}^{A}$ of the type ${{a}_{1}}^{{a}_{2}}$, ${a}_{1},{a}_{2}\in A$, can be represented as ${\left({{a}_{1}}^{{a}_{2}-y}\right)}^{\left({y}^{z}\right)}$ for any $y,z\in A$, which means that ${{a}_{1}}^{{a}_{2}}\in{\left({A}^{A}\right)}^{\left({A}^{A}\right)}$. The general case follows directly from this result, which proves that ${A}^{A}={\left({A}^{A}\right)}^{\left({A}^{A}\right)}$.
\end{example}

This notion is an analogue of the well-known notions of perfect objects in the categories of associative or Leibniz algebras, which are algebras with representable actions, i.e. which have actors in the corresponding categories (see e.g. \cite{Casas2010,Datuashvili2017}).

\begin{lemma}\label{lem:perfect}
	If $A$ is a perfect object of $\rGwa$, then $b\cdot a=a$ for any $b\cdot\text{-}\in\bb$, $\bb\in\PA(A)$.
\end{lemma}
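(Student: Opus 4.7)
The plan is to show that the set $S := \{x \in A : b\cdot x = x\}$ is a subobject of $A$ in $\rGwa$ which contains every generator $a^{a'}$ (with $a' \neq 0$) of $A^{A}$; the perfectness hypothesis $A^{A} = A$ will then immediately force $S = A$, which is exactly the content of the lemma.

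First I would verify that $S$ is closed under the algebraic structure of $\rGwa$. Taking $a = a' = 0$ in condition $1$ of Definition~\ref{def:pentcond} gives $b\cdot 0 = b\cdot 0 + b\cdot 0$, so $b\cdot 0 = 0$ and $0 \in S$. Condition $1$ itself shows that $S$ is closed under addition: if $x, x' \in S$, then $b\cdot(x+x') = b\cdot x + b\cdot x' = x + x'$. Closure under inverses then follows from $0 = b\cdot(x + (-x)) = b\cdot x + b\cdot(-x)$, which yields $b\cdot(-x) = -x$ whenever $x \in S$. For closure under the self-action of $A$, condition $6$ of Definition~\ref{def:pentcond} does all the work: for any $x \in A$ and any nonzero $y \in A$ one has $b\cdot x^{y} = x^{y}$ outright, so in fact every element of the form $x^{y}$ with $y \neq 0$ already lies in $S$ with no hypothesis on $x$ or $y$; and when $y = 0$, the element $x^{y} = x$ lies in $S$ as soon as $x$ does.

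With these verifications, $S$ is a subobject of $A$ in $\rGwa$ containing every generator of $A^{A}$, so $A^{A} \subseteq S$. The hypothesis $A^{A} = A$ then yields $S = A$, i.e.\ $b\cdot a = a$ for every $a \in A$, as required. I do not expect a serious obstacle here: the argument is a routine closure proof and the only delicate point is the side condition $a' \neq 0$ in condition $6$, which is harmless because the degenerate case $a' = 0$ reduces to the identity $a^{0} = a$ in the group-with-action structure.
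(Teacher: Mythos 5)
Your proof is correct and takes essentially the same route as the paper, whose entire proof is the one-line remark that the result follows from condition 6 of Definition~\ref{def:pentcond}. You have simply made explicit the closure argument that the paper leaves implicit: the fixed-point set of $b\cdot\text{-}$ is a subobject of $A$ containing all generators ${a}^{a'}$ of ${A}^{A}$, so perfectness forces it to be all of $A$.
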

\begin{proof}
	The result follows from the condition $6$ of Definition \ref{def:pentcond}.
\end{proof}

\begin{corollary}\label{cor:changeofpower}
	${\left({b}^{b'}\right)}^{a}={\left({b}^{a}\right)}^{b'}$, for any $a\in A$, ${\left({b}^{b'}\right)}^{(\text{-})}\in{\bb}^{\bb'}$, $\bb,\bb'\in\PA(A)$, ${b}^{(\text{-})}\in\bb$, ${(\text{-})}^{b'}\in\bb'$.
\end{corollary}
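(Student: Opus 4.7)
The plan is to unpack the definition of the fifth component of ${\bb}^{\bb'}$ given in the display \eqref{eq:btob} and then invoke Lemma \ref{lem:perfect}. Since this statement is labelled a corollary of that lemma, I read the standing hypothesis that $A$ is perfect as carried over from the lemma.

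First I would write out directly from \eqref{eq:btob} that
\begin{equation*}
	{\left({b}^{b'}\right)}^{a} \;=\; {\left({b}^{\left(b'\cdot a\right)}\right)}^{b'}.
\end{equation*}
The inner subterm $b'\cdot a$ is the value of the function $b'\cdot\text{-}$ taken from the pentaction $\bb'\in\PA(A)$, so Lemma \ref{lem:perfect} applies and gives $b'\cdot a = a$. Substituting this equality inside the outer application of ${(\text{-})}^{b'}\in\bb'$ yields
\begin{equation*}
	{\left({b}^{\left(b'\cdot a\right)}\right)}^{b'} \;=\; {\left({b}^{a}\right)}^{b'},
\end{equation*}
which is exactly the desired identity.

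Since the proof is essentially a one-line rewriting after Lemma \ref{lem:perfect}, I do not anticipate a real obstacle; the only subtlety is to be explicit that $b'\cdot\text{-}$ really is a component of a pentaction (so that Lemma \ref{lem:perfect} is applicable inside the power) and to note that the assumption of perfectness of $A$ is inherited from the lemma being applied. This is presumably why the statement is presented as a corollary rather than a lemma in its own right.
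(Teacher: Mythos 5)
Your proof is correct and follows exactly the paper's own argument: unpack the fifth component of ${\bb}^{\bb'}$ from \eqref{eq:btob} to get ${\left({b}^{b'\cdot a}\right)}^{b'}$ and then apply Lemma \ref{lem:perfect} (using that $A$ is perfect) to replace $b'\cdot a$ by $a$. Your remark about the perfectness hypothesis being inherited from the lemma is a fair and accurate reading of why the statement is a corollary.
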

\begin{proof}
	Here we apply the definition of ${\bb}^{\bb'}$, where 
	\begin{equation*}
		{\left({b}^{b'}\right)}^{a}={\left({b}^{b'\cdot a}\right)}^{b'}
	\end{equation*}
	and by Lemma \ref{lem:perfect} we have $b'\cdot a =a$.
\end{proof}

\begin{corollary}
	${\left({a}^{a'}\right)}^{b}={\left({a}^{b}\right)}^{a'}$, where ${(\text{-})}^{b}\in\bb$, $\bb\in\PA(A)$.
\end{corollary}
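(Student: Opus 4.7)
The plan is to read condition~$5$ of Definition~\ref{def:pentcond} backwards and collapse the $b\cdot a'$ that appears inside using Lemma~\ref{lem:perfect}. Concretely, I would start from the right-hand side ${\left({a}^{b}\right)}^{a'}$ and rewrite it via condition~$5$ as ${\left({a}^{b\cdot a'}\right)}^{b}$. Since this corollary sits in the same line of reasoning as the preceding one (which explicitly needed perfectness of $A$), I interpret $A$ as being a perfect object of $\rGwa$; then Lemma~\ref{lem:perfect} gives $b\cdot a'=a'$, so ${a}^{b\cdot a'}={a}^{a'}$, and the chain closes to ${\left({a}^{a'}\right)}^{b}$.

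There is essentially no obstacle here: the identity is just condition~$5$ after noticing that the inner $b\cdot a'$ is trivial on a perfect object. The only thing to double-check is that every ingredient refers to the same ${(\text{-})}^{b}\in\bb$ throughout, so that condition~$5$ applies with the same $b$ on both sides; this is exactly how it is stated in Definition~\ref{def:pentcond}. If $A$ is not assumed perfect, the statement should be read as the conditional form ${\left({a}^{b\cdot a'}\right)}^{b}={\left({a}^{b}\right)}^{a'}$ (which is just condition~$5$), and the perfectness hypothesis turns the left-hand side into ${\left({a}^{a'}\right)}^{b}$.

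So the proof would be a single two-step display: by condition~$5$ of Definition~\ref{def:pentcond},
\begin{equation*}
    {\left({a}^{b}\right)}^{a'}={\left({a}^{b\cdot a'}\right)}^{b},
\end{equation*}
and by Lemma~\ref{lem:perfect} we have $b\cdot a'=a'$, hence ${\left({a}^{b\cdot a'}\right)}^{b}={\left({a}^{a'}\right)}^{b}$, which gives the claim. No auxiliary identities from the long list $a_1,\dots,a_{10}$ or from the other pentaction axioms are needed.
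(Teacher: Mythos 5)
Your proof is correct and coincides with the paper's own argument: the paper likewise derives the identity from condition~5 of Definition~\ref{def:pentcond} together with Lemma~\ref{lem:perfect}. Your observation that perfectness of $A$ is implicitly assumed (since Lemma~\ref{lem:perfect} requires it) is also consistent with the paper's intent.
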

\begin{proof}
	It follows from the condition $5$ of Definition \ref{def:pentcond} and Lemma \ref{lem:perfect}.
\end{proof}

\begin{definition}
	A stabilizer of the object $A\in\rGwa$, denoted by $\St(A)$, is a set of elements $\oa\in A$ with the property that ${a}^{\oa}=a$, for any $a\in A$, i.e.
	\begin{equation*}
		\St(A)=\left\{\oa\in A \;\middle|\; {a}^{\oa}=a, \text{ for any } a\in A \right\}.
	\end{equation*}
\end{definition}

Note that a notion of stabilizer of an element of a set is known for the case group with action on a set. We will see that annihilation of certain type stabilizers, defined below, will play analogous role in the category of reduced groups with action, as annihilators play in the case of associative and Leibniz algebras for the existence of actors in these categories; here we mean the well-known condition $\Ann(A)=0$ in the corresponding categories (see e.g. \cite{Casas2010,Datuashvili2017}).

\begin{lemma}
	Let $A\in\rGwa$. If $\St(A)=0$, then $A=0$.
\end{lemma}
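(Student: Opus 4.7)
The plan is to prove the contrapositive by producing a systematic supply of elements of $\St(A)$: if I can show that every expression of the form $-b + {b}^{c}$ lies in $\St(A)$, then $\St(A)=0$ forces ${b}^{c}=b$ for all $b,c\in A$, i.e.\ the self-action is trivial; and under a trivial action every element of $A$ lies in $\St(A)$, so $\St(A) = A$, which combined with $\St(A)=0$ gives $A=0$.

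The key step is the computation showing $-b + {b}^{c}\in\St(A)$ for arbitrary $b,c\in A$. For any $a\in A$ I would write
\begin{equation*}
    {a}^{-b + {b}^{c}} \;=\; {\left({a}^{-b}\right)}^{{b}^{c}} \;=\; {\left({a}^{-b}\right)}^{b} \;=\; {a}^{-b+b} \;=\; {a}^{0} \;=\; a,
\end{equation*}
where the first and third equalities use the group-action axiom ${a}^{h+h'}={\left({a}^{h}\right)}^{h'}$, the second uses the defining $\rGwa$-axiom ${x}^{\left({y}^{z}\right)}={x}^{y}$ applied with $x={a}^{-b}$, $y=b$, $z=c$, and the last uses ${a}^{0}=a$. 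Since $a\in A$ was arbitrary, this verifies $-b+{b}^{c}\in\St(A)$.

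I do not anticipate a serious obstacle; the argument is essentially this single calculation followed by the two-line conclusion. The only mild subtlety is noticing that the $\rGwa$-axiom ${x}^{\left({y}^{z}\right)}={x}^{y}$ is exactly the tool needed to collapse the exponent ${b}^{c}$ down to $b$, after which the additive group-action axiom reduces the expression to ${a}^{0}=a$.
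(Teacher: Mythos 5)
Your proposal is correct and takes essentially the same route as the paper: the paper exhibits ${a_1}^{a_2}-a_1$ (rather than your $-b+{b}^{c}$) as an element of $\St(A)$ using exactly the same two axioms, concludes the self-action is trivial, and then that $A=\St(A)=0$. The difference in the order of the two summands is immaterial.
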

\begin{proof}
	For any $a,a_1$ and $a_2$ from $A$ we have 
	\begin{equation*}
		{a}^{\left({a_{1}}^{a_2}-a_1\right)}={\left({a}^{a_1}\right)}^{-a_1}=a;
	\end{equation*}
	therefore ${a_1}^{a_2}-a_1=0$, since $\St(A)=0$. From this it follows that ${a_1}^{a_2}=a_1$, which means that the action of $A$ on itself is trivial; it implies that $A=\St(A)=0$.
\end{proof}

\begin{definition}
	A weak stabilizer of an object $A\in\rGwa$ is defined as follows;
	\begin{equation*}
		\wSt(A)=\left\{{b}^{\left({b'}^{a}\right)}, {b}^{\left({a}^{b'}\right)}-{b}^{a}, {a}^{b+b'}-{a}^{b'+b}\right\},
	\end{equation*}
	where $a\in A$, ${b}^{(\text{-})}\in\bb$, ${b'}^{(\text{-})},{(\text{-})}^{b'}\in\bb'$, ${(\text{-})}^{b+b'}\in\bb+\bb'$, ${(\text{-})}^{b'+b}\in\bb'+\bb$, $\bb,\bb'\in\PA(A)$. 
\end{definition}

\begin{lemma}
	$\wSt(A)$ is a subset of $\St(A)$, for any $A\in\rGwa$.
\end{lemma}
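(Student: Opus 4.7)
The plan is to verify that each of the three kinds of generators listed in the definition of $\wSt(A)$ is individually a stabilizer, i.e.\ that $x^{\bar a} = x$ holds for every $x \in A$ and every $\bar a$ of those three forms. Two uniform tools will do all the work. First, since any $A\in\rGwa$ is in particular a group with action, $x^{u+v}=(x^u)^v$ holds for $u,v\in A$, which implies the very convenient criterion
\begin{equation*}
	x^{u-v}=x \quad \Longleftrightarrow \quad x^{u}=x^{v},
\end{equation*}
since $(x^{u-v})^{v}=x^{u}$ and $x^0=x$. Second, I will freely use the vanishing identities ${0}^{b}={}^{b}0=0$ and $b^0=0$ recorded in the paragraph immediately after Definition \ref{def:pentcond}.

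For the first generator $b^{(b'^a)}$ the argument is essentially a single invocation of condition $10$: if $b'^{a}\neq 0$ then $x^{b^{(b'^{a})}}=x$ by that condition applied with outer exponent $b^{a'}$ where $a':=b'^a$; and if $b'^a=0$ then $b^{(b'^a)}=b^0=0$, so $x^{b^{(b'^a)}}=x^0=x$. For the second generator $b^{(a^{b'})}-b^a$, I apply the $u-v$ criterion above: the question reduces to showing $x^{b^{(a^{b'})}}=x^{b^{a}}$ for all $x$. A short case analysis on whether the elements $a$ and $a^{b'}$ of $A$ are zero then makes each side equal to $x$: when these elements are nonzero condition $10$ gives the value $x$, and when one of them vanishes the vanishing identities collapse the corresponding exponent to $0$, and again $x^0=x$.

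For the third generator $a^{b+b'}-a^{b'+b}$ the plan is to unfold the definition of the sum pentaction from \eqref{eq:bpb} so that $a^{b+b'}=(a^b)^{b'}$ and $a^{b'+b}=(a^{b'})^b$, and then apply condition $9$ twice on each side. The outer exponents $(a^b)^{b'}$ and $(a^{b'})^b$ are elements of $A$ of the form $y^{b'}$ and $y^{b}$ respectively, so condition $9$ gives $x^{(a^b)^{b'}}=x^{a^b}$ and $x^{(a^{b'})^b}=x^{a^{b'}}$; a second application reduces both right-hand sides to $x^a$. Hence both exponents act the same way on $x$, and the $u-v$ criterion places the difference in $\St(A)$.

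The only mildly delicate step is the second generator, where condition $10$ carries the hypothesis $a'\neq 0$; this forces the small case analysis above, but the vanishing identities $0^{b'}=0$ and $b^0=0$ handle the degenerate cases without difficulty. The first and third items are then essentially one-line applications of conditions $10$ and $9$ respectively, once the group action identity $x^{u+v}=(x^u)^v$ is invoked at the start.
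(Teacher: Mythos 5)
Your proof is correct and follows essentially the same route as the paper's: each generator of $\wSt(A)$ is shown to stabilize every $x\in A$ by applying conditions $9$ and $10$ of Definition \ref{def:pentcond} together with the group-action law $x^{u+v}=(x^u)^v$ for the self-action. Your explicit handling of the degenerate cases (where the inner exponent vanishes, so that ${b}^{0}=0$ and $x^{0}=x$ take over from condition $10$) is a small but welcome refinement of the paper's argument, which leaves those cases implicit.
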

\begin{proof}
	For any $\oa\in A$, we have ${\oa}^{\left({b}^{\left({b'}^{a}\right)}\right)}=\oa$ by Definition \ref{def:pentcond}, condition 10. Applying the same condition we have
	\begin{equation*}
		{\oa}^{\left({b}^{\left({a}^{b'}\right)}-{b}^{a}\right)} = {\left({\oa}^{\left({b}^{\left({a}^{{b}^{\prime}}\right)}\right)}\right)}^{\left(-{b}^{a}\right)} = {\vphantom{\left(A^a\right)} \oa}^{\left(-{b}^{a}\right)} = \oa.
	\end{equation*}
	
	Here we applied that if ${\vphantom{\left(A^a\right)} \oa}^{{a}_{1}}=\oa$, then it follows that ${\vphantom{\left(A^a\right)} \oa}^{\left(-{a}_{1}\right)}=\oa$.
	
	We have the following equalities
	\begin{equation*}
		{\vphantom{\left(A^a\right)}\oa}^{\left({a}^{\left(b+b'\right)}-{a}^{\left(b'+b\right)}\right)} = {\left({\vphantom{\left(A^a\right)} \oa}^{a}\right)}^{-a} = {\vphantom{\left(A^a\right)} \oa}^{0} = a,
	\end{equation*}
	where we applied condition 9 of Definition \ref{def:pentcond} and the fact that $-\left({a}^{{b}_{_1}}\right)={(-a)}^{{b}_{1}}$, since 
	\begin{equation*}
		0={0}^{{b}_{1}}={(-a+a)}^{{b}_{1}}={(-a)}^{{b}_{1}}+{a}^{{b}_{1}},
	\end{equation*}
	for any ${(\text{-})}^{{b}_{1}}\in{\bb}_{1}$, ${\bb}_{1}\in\PA(A)$.
\end{proof}

Note that $\St(A)$ is a subobject of $A$, while $\wSt(A)$ is a subset of $A$ in general. 

\begin{proposition}\label{prop:sumofpent}
	Let $A\in\rGwa$ and $\bb,\bb'\in\PA(A)$. If $A$ is perfect in $\rGwa$, then $\bb+\bb'\in\PA(A)$.
\end{proposition}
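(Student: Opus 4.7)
The plan is to verify, one at a time, each of the conditions 1 through 12 (together with their $^{\circ}$ variants) of Definition \ref{def:pentcond} for the tuple $\bb+\bb'$, using the defining formulas \eqref{eq:bpb}, the fact that $\bb$ and $\bb'$ already satisfy these conditions, and Lemma \ref{lem:perfect} together with its symmetric counterpart $a\cdot b=a$, which follows at once from condition $6^{\circ}$ and the additivity $1^{\circ}$ since $A=A^{A}$. The $\rGwa$-structure of $A$ also provides two shortcuts used throughout: ${x}^{y}+z=z+{x}^{y}$ for $y\neq 0$, and ${x}^{{y}^{z}}={x}^{y}$.

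Under perfectness we have $(b+b')\cdot a = b\cdot(b'\cdot a)=a$ and $a\cdot(b+b')=a$, so all conditions that involve only the first two components of $\bb+\bb'$ collapse to trivial identities. This disposes of $1,1^{\circ},3,3^{\circ},6,6^{\circ}$ and $11$. The ``nested'' conditions $2,2^{\circ},7,9,9^{\circ},12$ come out by two successive applications of the corresponding condition for $\bb'$ and then for $\bb$ (or vice versa); for instance, condition 12 reads ${\vphantom{\left({a}^{b+b'}\right)}}^{b+b'}\!\left({a}^{b+b'}\right)={\vphantom{\left({a}^{b+b'}\right)}}^{b}\!\left({\vphantom{\left({a}^{b+b'}\right)}}^{b'}\!\left({({a}^{b})}^{b'}\right)\right)={\vphantom{\left({a}^{b}\right)}}^{b}({a}^{b})=a$.

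The substantive checks are conditions $4,5,5^{\circ},8,8^{\circ}$ and $10$. For condition 4 I would expand both sides using \eqref{eq:bpb} and condition 4 for $\bb,\bb'$, then kill every $b\cdot$ factor by Lemma \ref{lem:perfect}; after these reductions both sides become a sum of the four terms ${({b}^{a})}^{a'},\; {({(b')}^{a})}^{a'},\; {b}^{a'},\; {(b')}^{a'}$, which can be matched by the $\rGwa$ commutativity rule (valid here because $a'\neq 0$ is the only non-trivial case). For 5 and $5^{\circ}$ the key point is that $(b+b')\cdot a'=a'$, so both sides reduce to an expression involving only the outer exponent swap, which is handled by the Corollary ``${({a}^{a'})}^{b}={({a}^{b})}^{a'}$'' applied once with $\bb$ and once with $\bb'$. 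Condition 10 follows from $(b+b')^{a'}={b}^{a'}+{(b')}^{a'}$ (again using perfectness) and the $\rGwa$ identity ${a}^{x+y}={({a}^{x})}^{y}$, which turns it into two applications of condition 10 for $\bb$ and for $\bb'$.

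The main obstacle I expect is the case analysis needed for conditions 8 and $8^{\circ}$. To verify 8 one must show: if ${a}^{b+b'}={({a}^{b})}^{b'}\neq a$ for some $a\in A$, then ${a}^{b+b'}$ commutes with every $a'\in A$. I would split on whether the function ${(-)}^{b'}$ is the identity on $A$: if not, condition 8 for $\bb'$ applied to ${a}^{b}$ yields the needed commutativity; if yes, then ${a}^{b+b'}={a}^{b}$, which by hypothesis is non-identity for some $a$, and condition 8 for $\bb$ finishes the argument. The symmetric case handles $8^{\circ}$. Apart from this split and the careful bookkeeping required in condition 4 (choosing the order in which to commute terms of the form ${x}^{y}$ with $y\neq 0$ past other summands), the verification is entirely routine.
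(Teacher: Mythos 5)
Your proposal is correct and follows essentially the same route as the paper: a condition-by-condition verification of Definition \ref{def:pentcond} for $\bb+\bb'$ using the formulas \eqref{eq:bpb}, the corresponding conditions for $\bb$ and $\bb'$, Lemma \ref{lem:perfect}, and the centrality in $\rGwa$ of elements ${x}^{y}$ with $y\neq 0$. The only noticeable divergence is condition 8, where the paper simply notes that the equality always holds because $A$ is perfect (hence generated by central elements), whereas your case split on whether ${(\text{-})}^{b'}$ is the identity is more laborious but equally valid and does not even need perfectness.
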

\begin{proof}
	We shall show that $\bb+\bb'$ defined by \eqref{eq:bpb} satisfies conditions 1--12 of Definition \ref{def:pentcond}.
	\begin{enumerate}[leftmargin=1cm]
		\item[1.] Checking of condition 1: for the left side we have
		\begin{alignat*}{2}
			\left(b+b'\right)\cdot(a+a')&=b\cdot(b'\cdot(a+a'))\\ 
			&=b\cdot(b'\cdot a+b'\cdot a') \\
			&=b\cdot(b'\cdot a)+ b\cdot(b'\cdot a') \\
			&= \left(b+b'\right)\cdot a + \left(b+b'\right)\cdot a',
		\end{alignat*}
		which proves the condition.
		\item[2.] Condition 2: for the left side we have
		\begin{alignat*}{2}
			{(a+a')}^{\left(b+b'\right)}&= {\left({(a+a')}^{b}\right)}^{b'} \\
			&= {\left({a}^{b}\right)}^{b'}+{\left({a'}^{b}\right)}^{b'} \\
			&= {a}^{\left(b+b'\right)}+{a'}^{\left(b+b'\right)},
		\end{alignat*} 
		which ends the proof.
		\item[3.] For $a\neq 0$ we have
		\begin{equation*}
			{\left(\left(b+b'\right)\cdot a\right)}^{a'} = {\left(b\cdot(b'\cdot a)\right)}^{a'} = {\left(b'\cdot a\right)}^{a'} = {a}^{a'},
		\end{equation*}
		here we applied that condition 3 holds for $b$ and $b'$, and therefore proved that this condition for any $A$, not taking into account that
		$A$ is perfect. 
		\item[4.] We have to show 
		\begin{equation*}
			{\left(b+b'\right)}^{(a+a')}={\left({\left(b+b'\right)}^{a}\right)}^{a'}+{\left(b+b'\right)}^{a'}.
		\end{equation*}
		For the left side we have
		\begin{alignat*}{2}
			{\left(b+b'\right)}^{(a+a')} &= {b}^{a+a'}+b\cdot {b'}^{a+a'} \\
			&= {\left({b}^{a}\right)}^{a'}+{b}^{a'}+b\cdot\left({\left({b'}^{a}\right)}^{a'}+{b'}^{a'}\right) \\
			&= {\left({b}^{a}\right)}^{a'}+{b}^{a'}+b\cdot{\left({b'}^{a}\right)}^{a'}+b\cdot\left({b'}^{a'}\right).
		\end{alignat*}
		For the right side we have
		\begin{alignat*}{2}
			{\left({\left(b+b'\right)}^{a}\right)}^{a'}+{\left(b+b'\right)}^{a'} &= {\left({b}^{a}+b\cdot {b'}^{a}\right)}^{a'}+{b}^{a'}+b\cdot \left({b'}^{a'}\right) \\
			&= {\left({b}^{a}\right)}^{a'}+{\left(b\cdot{b'}^{a}\right)}^{a'}+{b}^{a'}+b\cdot\left({b'}^{a'}\right).
		\end{alignat*}
		Since $A$ is perfect ${\left(b\cdot{b'}^{a}\right)}^{a'}+{b}^{a'}={b}^{a'}+{\left(b\cdot{b'}^{a}\right)}^{a'}$ which ends the proof of equality 4. 
		\item[5.] In order to prove this condition we need to show that the following equality is satisfied.
		\begin{equation*}
			{\left({a}^{\left(b+b'\right)\cdot a'}\right)}^{\left(b+b'\right)}={\left({a}^{\left(b+b'\right)}\right)}^{a'}.
		\end{equation*}
		We compute first the left side, which is equal to the following
		\begin{equation*}
			{\left({a}^{b\cdot(b'\cdot a')}\right)}^{\left(b+b'\right)} = {\left({\left({a}^{b\cdot(b'\cdot a')}\right)}^{b}\right)}^{b'} = {\left({\left({a}^{b}\right)}^{b'\cdot a'}\right)}^{b'}.
		\end{equation*}
		Here we applied condition 5 for ${(\text{-})}^{b}$ and ${(\text{-})}^{b'}$. Note that one could apply also the fact that $A$ is perfect, and replace $b'\cdot a'$ by $a'$ by Lemma \ref{lem:perfect}, but we will show that this condition holds for $\bb+\bb'$ for any $A\in\rGwa$. The right side is equal to
		\begin{equation*}
			{\left({\left({a}^{b}\right)}^{b'}\right)}^{a'} = {\left({\left({a}^{b}\right)}^{b'\cdot a'}\right)}^{b'},
		\end{equation*}
		here we applied condition 5 for ${(\text{-})}^{b'}$, which proves the desired equality.
		\item[6.] We have to show that $\left(b+b'\right)\cdot{a}^{a'}={a}^{a'}$ for $a'\neq0$. The left side is equal to $b\cdot\left(b'\cdot {a}^{a'}\right)=b\cdot {a}^{a'}={a}^{a'}$, here we applied condition 6 for $b'\cdot \text{-}$ and $b\cdot \text{-}$.
		\item[7.] Now we will prove the equality
		\begin{equation*}
			{\left(b+b'\right)}^{\left({a}^{a'}\right)}={\left(b+b'\right)}^{a}.
		\end{equation*}
		We have
		\begin{alignat*}{2}
			{\left(b+b'\right)}^{\left({a}^{a'}\right)} &= {b}^{\left({a}^{a'}\right)}+b\cdot {b'}^{\left({a}^{a'}\right)} \\
			&= {b}^{a}+b\cdot {(b')}^{a} \\
			&={\left(b+b'\right)}^{a},
		\end{alignat*}
		since ${b}^{(\text{-})}$ and ${b'}^{(\text{-})}$ satisfy condition 7, which proves the equality.
		\item[8.] The equality 
		\begin{equation*}
			{a}^{\left(b+b'\right)}+a'=a'+{a}^{\left(b+b'\right)}.
		\end{equation*}
		always holds since $A$ is perfect.
		\item[9.] We need to show that
		\begin{equation*}
			{a}^{\left({a'}^{\left(b+b'\right)}\right)}={a}^{a'}.
		\end{equation*}
		The left side is equal to 
		\begin{equation*}
			{a}^{\left(\left({a'}^{b}\right)^{b'}\right)} = {a}^{\left({a'}^{b}\right)} = {a}^{a'}.
		\end{equation*}
		Here we applied condition 9 for ${(\text{-})}^{b}$ and ${(\text{-})}^{b'}$.
		\item[10.] Now we check the condition
		\begin{equation*}
			{a}^{\left({\left(b+b'\right)}^{a'}\right)}=a.
		\end{equation*}
		We compute the left side, which is equal to 
		\begin{equation*}
			{a}^{\left({b}^{a'}+b\cdot{b'}^{a'}\right)}={\left({a}^{\left({b}^{a'}\right)}\right)}^{\left(b\cdot{b'}^{a'}\right)},
		\end{equation*}
		here we will apply that $A$ is perfect and the fact that ${a}^{\left({b}^{a'}\right)}=a$ by condition 10 for $\bb$, which gives that the obtained expression is equal to ${a}^{\left({b'}^{a'}\right)}=a$, since $\bb'$ satisfies condition 10.
		\item[11.] We have to show 
		\begin{equation*}
			(\left(b+b'\right)\cdot a)\cdot \left(b+b'\right)=\left(b+b'\right)\cdot (a\cdot \left(b+b'\right))=a.
		\end{equation*}
		We apply that this condition holds for $\bb$ and $\bb'$ and obtain that the left side is equal to 
		\begin{equation*}
			\left(\left(b\cdot\left(b'\cdot a\right)\right)\cdot b\right)\cdot b' = \left(b'\cdot a\right)\cdot b'=a.
		\end{equation*}
		Note that this equality is proved for any $A$. If we take into account that $A$ is perfect, then it is trivial that the equality holds, since $\bb$ and $\bb'$ satisfy conditions 6 and 6$^{circ}$ of Definition \ref{def:pentcond}.
		\item[12.] Last of all we prove the equality
		\begin{equation*}
			{\vphantom{\left({}^{\left(b+b'\right)}{a}\right)}}^{\left(b+b'\right)}{\left({a}^{\left(b+b'\right)}\right)}={\left({}^{\left(b+b'\right)}{a}\right)}^{\left(b+b'\right)}=a.
		\end{equation*}
		The left side is equal to
		\begin{equation*}
			{\vphantom{\left({\vphantom{\left({\left({a}^{b}\right)}^{b'}\right)}}^{b'}{\left({\left({a}^{b}\right)}^{b'}\right)}\right)}}^{b}{\left({\vphantom{\left({\left({a}^{b}\right)}^{b'}\right)}}^{b'}{\left({\left({a}^{b}\right)}^{b'}\right)}\right)}={\vphantom{\left({a}^{b}\right)}}^{b}{\left({a}^{b}\right)}=a,
		\end{equation*}
		where we apply condition 12 for $\bb$ and $\bb'$.
	\end{enumerate}
	The equalities $1^{\circ},2^{\circ},3^{\circ},5^{\circ},6^{\circ},8^{\circ}$ and $9^{\circ}$ are proved in the similar ways as the equalities $1,2,3,5,6,8$ and $9$, respectively.
\end{proof}

\begin{proposition}
	For any $A\in\rGwa$ and $\bb,\bb'\in\PA(A)$, if $A$ is perfect, then ${\bb}^{\bb'}\in\PA(A)$.
\end{proposition}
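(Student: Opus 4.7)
The plan is to verify each of the conditions $1$--$12$ (and their $^{\circ}$ counterparts) of Definition \ref{def:pentcond} for the family ${\bb}^{\bb'}$ defined in \eqref{eq:btob}. The structure of ${\bb}^{\bb'}$ makes the majority of conditions immediate: since by definition $(b^{b'})\cdot a = a$ and $a\cdot (b^{b'}) = a$, the conditions $1$, $1^{\circ}$, $3$, $3^{\circ}$, $6$, $6^{\circ}$, and $11$ hold without any hypothesis on $A$. Since ${a}^{(b^{b'})} = {a}^{b}$ and ${}^{(b^{b'})}a = {}^{b}a$, the conditions $2$, $2^{\circ}$, $8$, $8^{\circ}$, and $12$ reduce directly to the corresponding conditions for $\bb$.

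The substantive work concerns the fifth component ${(b^{b'})}^{(-)}$. By Lemma \ref{lem:perfect} and perfectness of $A$, $b'\cdot a = a$ for every $a\in A$, so the definition ${(b^{b'})}^{a} = {(b^{b'\cdot a})}^{b'}$ simplifies to ${(b^{a})}^{b'}$, and I would use this simplification throughout. For condition $5$, both sides reduce via this simplification to the identity ${(a^{a'})}^{b} = {(a^{b})}^{a'}$, which is the unnamed corollary following Corollary \ref{cor:changeofpower}. Condition $7$ becomes $(b^{a^{a'}})^{b'} = (b^{a})^{b'}$, which is immediate from condition $7$ for $\bb$. Condition $9$ reduces to condition $9$ for $\bb$, since ${a'}^{(b^{b'})} = {a'}^{b}$.

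For condition $4$, I would expand the left-hand side as $(b^{b'})^{a+a'} = (b^{a+a'})^{b'}$, rewrite $b^{a+a'}$ as $(b^{a})^{a'} + b^{a'}$ via condition $4$ for $\bb$, distribute the outer $(-)^{b'}$ by condition $2$ for $\bb'$, and then match this to the right-hand side $((b^{a})^{b'})^{a'} + (b^{a'})^{b'}$ by exchanging $(-)^{a'}$ and $(-)^{b'}$ on the first summand using the corollary after Corollary \ref{cor:changeofpower} with $b^{a}\in A$.

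The main obstacle is condition $10$: I need $a^{((b^{b'})^{a'})} = a$ for $a'\neq 0$. After the perfectness simplification this becomes $a^{((b^{a'})^{b'})} = a$. The key move is to treat $b^{a'}$ as an element of $A$ and apply condition $9$ for $\bb'$, which gives $a^{((b^{a'})^{b'})} = a^{(b^{a'})}$; then condition $10$ for $\bb$ (using $a'\neq 0$) yields $a^{(b^{a'})} = a$. This two-step interleaving of a condition from $\bb'$ with a condition from $\bb$ is the only place where genuine interaction between the two pentactions is required, and the $^{\circ}$ versions $5^{\circ}$ and $9^{\circ}$ are then handled by symmetric arguments.
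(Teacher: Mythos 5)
Your proposal is correct and follows essentially the same route as the paper: a condition-by-condition verification in which the definitions $(b^{b'})\cdot a=a$ and $a^{(b^{b'})}=a^{b}$ trivialize everything except the fifth component, and condition 10 is handled by interleaving condition 9 for $\bb'$ with condition 10 for $\bb$, exactly as in the paper. The only cosmetic difference is that you invoke the perfectness simplification ${\left({b}^{b'}\right)}^{a}={\left({b}^{a}\right)}^{b'}$ (Corollary \ref{cor:changeofpower}) uniformly at the outset, whereas the paper sometimes carries $b'\cdot a$ along and applies condition 5 (or conditions 2 and 4) for the inner pentaction instead.
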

\begin{proof}
	We will check the conditions of Definition \ref{def:pentcond} like we did it in the proof of Proposition \ref{prop:sumofpent}.
	\begin{enumerate}[leftmargin=1cm]
		\item[1.] We have ${b}^{b'}\cdot(a+a')={b}^{b'}\cdot a+{b}^{b'}\cdot a'$, since by the definition of ${\bb}^{\bb'}$, ${b}^{b'}\cdot a=a$ for any $a\in A$.
		\item[2.] We have to show
		\begin{equation*}
			{(a+a')}^{\left({b}^{b'}\right)}={a}^{\left({b}^{b'}\right)}+{a'}^{\left({b}^{b'}\right)}.
		\end{equation*}
		The left side is equal to ${(a+a')}^{b}$ by the definition of ${\bb}^{\bb'}$ and we have ${(a+a')}^{b}={a}^{b}+{a'}^{b}$, since $\bb\in\PA(A)$. The right side is equal to the same expression again by the definition of ${\bb}^{\bb'}$.
		\item[3.] For any $a'\neq 0$ we have to show 
		\begin{equation*}
			{({b}^{b'}\cdot a)}^{a'}={a}^{a'}.
		\end{equation*}
		The left side is equal to ${a}^{a'}$ by the definition of ${\bb}^{\bb'}$.
		\item[4.] By the definition of ${\bb}^{\bb'}$ the left side is equal to 
		\begin{alignat*}{2}
			{\left({b}^{b'}\right)}^{(a+a')}&= {\left({b}^{b'\cdot(a+a')}\right)}^{b'} \\
			&= {\left({b}^{b'\cdot a+b'\cdot a'}\right)}^{b'} \\
			&= {\left({\left({b}^{b'\cdot a}\right)}^{b'\cdot a'}+{b}^{b'\cdot a'}\right)}^{b'} \\
			&= {\left({\left({b}^{b'\cdot a}\right)}^{b'}\right)}^{a'} + {\left({b}^{b'\cdot a'}\right)}^{b'},
		\end{alignat*}
		here we applied condition 4 of Definiton \ref{def:pentcond}. By the definition of ${\bb}^{\bb'}$ the right side is equal to ${\left({\left({b}^{b'\cdot a}\right)}^{b'}\right)}^{a'} + {\left({b}^{b'\cdot a'}\right)}^{b'}$, which proves the equality.
		\item[5.] In order to prove this condition we need to show that the following equality is satisfied.
		\begin{equation*}
			{\left({a}^{\left({b}^{b'}\right)\cdot a'}\right)}^{\left({b}^{b'}\right)}={\left({a}^{\left({b}^{b'}\right)}\right)}^{a'}.
		\end{equation*}
		The left side is equal to 
		\begin{equation*}
			{\left({a}^{a'}\right)}^{\left({b}^{b'}\right)}={\left({a}^{a'}\right)}^{b}.
		\end{equation*}
		The right side is equal to
		\begin{equation*}
			{\left({a}^{b}\right)}^{a'}={\left({a}^{b\cdot a'}\right)}^{b}={\left({a}^{a'}\right)}^{b};
		\end{equation*}
		here we applied that $A$ is perfect and therefore $b\cdot a'=a'$, which proves the equality.
		\item[6.] We have ${b}^{b'}\cdot{a}^{a'}={a}^{a'}$ for $a'\neq 0$, by the definition of ${\bb}^{\bb'}$.
		\item[7.] Now we will prove the equality
		\begin{equation*}
			{\left({b}^{b'}\right)}^{\left({a}^{a'}\right)}={\left({b}^{b'}\right)}^{a}.
		\end{equation*}
		The left side is equal to 
		\begin{equation*}
			{\left({b}^{b'\cdot \left({a}^{a'}\right)}\right)}^{b'}={\left({b}^{\left({a}^{a'}\right)}\right)}^{b'}={\left({b}^{a}\right)}^{b'},
		\end{equation*}
		here we applied conditions 6 and 7 of Definition \ref{def:pentcond}. The right side is equal to
		\begin{equation*}
			{\left({b}^{b'\cdot a}\right)}^{b'}={\left({b}^{a}\right)}^{b'},
		\end{equation*}
		since $A$ is perfect and $b'\cdot a=a$, which proves the equality.
		\item[8.] We have to show that if ${a}^{\left({b}^{b'}\right)}\neq a$ for at least one $a\in A$, then
		\begin{equation*}
			{a}^{\left({b}^{b'}\right)}+a'=a'+{a}^{\left({b}^{b'}\right)}.
		\end{equation*}
		The condition ${a}^{\left({b}^{b'}\right)}\neq a$ for at least one $a\in A$ is equivalent to the condition that ${a}^{b}\neq a$ for at least one $a\in A$.
		The left side is equal to 
		\begin{equation*}
			{a}^{\left({b}^{b'}\right)}+a'={a}^{b}+a',
		\end{equation*}
		by the definition of ${\bb}^{\bb'}$ and the condition 8 of Definition \ref{def:pentcond} for $\bb\in\PA(A)$. Note that the equality 8 is true in the case when $A$ is not perfect.
		\item[9.] We have to show that
		\begin{equation*}
			{a}^{\left({a'}^{\left({b}^{b'}\right)}\right)}={a}^{a'}.
		\end{equation*}
		For the left side, by the definition of ${\bb}^{\bb'}$, we have 
		\begin{equation*}
			{a}^{\left({a'}^{\left({b}^{b'}\right)}\right)}={a}^{\left({a'}^{b}\right)}={a}^{a'},
		\end{equation*}
		since $\bb$ is a pentaction.
		\item[10.] Now we check the condition
		\begin{equation*}
			{a}^{\left({\left({b}^{b'}\right)}^{a'}\right)}=a.
		\end{equation*}
		We have
		\begin{equation*}
			{a}^{\left({\left({b}^{b'}\right)}^{a'}\right)}={a}^{\left({\left({b}^{\left(b'\cdot a'\right)}\right)}^{b'}\right)}={a}^{\left({b}^{\left(b'\cdot a'\right)}\right)}=a.
		\end{equation*}
		Here we applied the definition of ${\bb}^{\bb'}$, conditions 9 and 10 for $\bb$ and $\bb'$, respectively, of Definition \ref{def:pentcond}.
	\end{enumerate}
	Condition 11 for ${\bb}^{\bb'}$ follows directly from the definition of ${\bb}^{\bb'}$. Condition 12 follows as well from the definition of ${\bb}^{\bb'}$ and the fact that $\bb$ is a pentaction with property 12.
	The conditions $1^{\circ},2^{\circ},3^{\circ},5^{\circ},6^{\circ},8^{\circ}$ and $9^{\circ}$ are proved in the similar ways as their duals above.
\end{proof}

\begin{theorem}\label{theo:pentactrgwa}
	If $A$ is a perfect object of $\rGwa$ with $\wSt(A)=0$, then $\PA(A)$ is an object of $\rGwa$.
\end{theorem}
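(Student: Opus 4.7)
By the two preceding propositions, $\PA(A)$ is already closed under both $+$ and $(-)^{(-)}$; it remains to verify that $(\PA(A),+,\bbz)$ is a group, that the rule $(\bb',\bb) \mapsto \bb^{\bb'}$ is a group action making $\PA(A)$ an object of $\Gwa$, and that the two reduced axioms of $\rGwa$ hold. On all coordinates except the fifth one, $b^{(-)}$, formulas \eqref{eq:bpb} and \eqref{eq:btob} show that $+$ is essentially composition of functions and $(-)^{(-)}$ is either trivial or a straight copy, so every required identity reduces on these coordinates to associativity of composition, to the corresponding axiom of the summands, or to condition $7$ of Definition \ref{def:pentcond}. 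All the actual work is therefore concentrated on the nonlinear fifth coordinate, and this is where both perfectness (through Lemma \ref{lem:perfect} and Corollary \ref{cor:changeofpower}) and the hypothesis $\wSt(A)=0$ intervene.

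The chief structural step is the construction of additive inverses. Conditions $11$ and $12$ of Definition \ref{def:pentcond} assert that $b\cdot(-)$ and $(-)\cdot b$ are mutually inverse bijections of $A$, as are $(-)^b$ and ${}^b(-)$, which forces
\[
-\bb \;=\; \bigl((-)\cdot b,\; b\cdot(-),\; {}^b(-),\; (-)^b,\; (-b)^{(-)}\bigr),
\]
where the fifth coordinate is uniquely determined by $\bb+(-\bb)=\bbz$ to be $(-b)^a := \bigl(-(b^a)\bigr)\cdot b$; condition $11$ then yields $b\cdot(-b)^a = -b^a$ and hence $b^a + b\cdot(-b)^a = 0$ as required. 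A mechanical dualization of each axiom of $\bb$ via the bijection identities shows $-\bb\in\PA(A)$. Associativity of $+$ and the identity axiom are then routine coordinate-wise checks. The action axioms $\bb^{\bbz}=\bb$, $\bb^{\bb'+\bb''}=(\bb^{\bb'})^{\bb''}$, $(\bb+\bb'')^{\bb'}=\bb^{\bb'}+(\bb'')^{\bb'}$ are handled in the same manner, with Corollary \ref{cor:changeofpower} reducing the fifth coordinate $({b}^{b'})^a$ to $({b}^a)^{b'}$ so that all five coordinates collapse to identities already available for $\bb,\bb',\bb''$.

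The two reduced axioms are the final and most delicate step. For $\bb_1^{(\bb_2^{\bb_3})} = \bb_1^{\bb_2}$ one unfolds $\bb_2^{\bb_3}$ using \eqref{eq:btob} and applies condition $7$ of Definition \ref{def:pentcond} (with Lemma \ref{lem:perfect} to simplify the fifth coordinate). For the commutativity axiom ${\bb_1}^{\bb_2}+\bb_3 = \bb_3+{\bb_1}^{\bb_2}$ (with $\bb_2\neq\bbz$), the first four coordinates match trivially from \eqref{eq:bpb}--\eqref{eq:btob}; on the fifth coordinate the difference between the two sides expands, via the definitions of $+$ and $(-)^{(-)}$, into a sum of expressions of exactly the three forms ${b}^{({b'}^{a})}$, ${b}^{({a}^{b'})} - {b}^{a}$ and ${b}^{a+a'} - {b}^{a'+a}$ that generate $\wSt(A)$, and so vanishes by hypothesis. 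The main obstacle throughout is the fifth coordinate: essentially every axiom produces on it a rearrangement error, and the three generating families of $\wSt(A)$ are precisely the three families of errors that appear, which is why $\wSt(A)=0$ is tailored to be exactly the correct hypothesis.
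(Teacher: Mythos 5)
Your overall architecture coincides with the paper's: the same additive inverse $-\bb$ (your fifth coordinate $(-b)^{a}=\left(-\left({b}^{a}\right)\right)\cdot b$ agrees with the paper's $-\left[\left({b}^{a}\right)\cdot b\right]$ because ${(\text{-})}\cdot b$ is additive by condition $1^{\circ}$), the same use of Lemma \ref{lem:perfect} and Corollary \ref{cor:changeofpower} for the group-action axioms, and the same placement of the hypothesis $\wSt(A)=0$ inside the commutativity axiom ${\bb}^{\bb'}+\bb''=\bb''+{\bb}^{\bb'}$. However, your treatment of that last axiom misidentifies where the obstruction lives, and this is a genuine error, not a presentational one. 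Your premise that everything outside the fifth coordinate reduces to composition of functions fails exactly here: on the \emph{third} coordinate ${(\text{-})}^{b}$ one computes
\[
{a}^{\left({b}^{b'}+b''\right)}={\left({a}^{\left({b}^{b'}\right)}\right)}^{b''}={\left({a}^{b}\right)}^{b''}={a}^{b+b''},
\qquad
{a}^{\left(b''+{b}^{b'}\right)}={\left({a}^{b''}\right)}^{\left({b}^{b'}\right)}={\left({a}^{b''}\right)}^{b}={a}^{b''+b},
\]
and no axiom of Definition \ref{def:pentcond} makes ${a}^{b+b''}$ equal to ${a}^{b''+b}$; their difference is precisely the third generating family of $\wSt(A)$, and this is the one and only point in Theorem \ref{theo:pentactrgwa} where the paper invokes $\wSt(A)=0$. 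Conversely, on the fifth coordinate the two sides are ${\left({b}^{b'}\right)}^{a}+{b''}^{a}$ and ${b''}^{a}+{\left({b}^{b'}\right)}^{a}$ (after applying the definition of ${\bb}^{\bb'}$ and Lemma \ref{lem:perfect}), and what is needed is that these two elements of $A$ commute, which the paper obtains from perfectness, not from $\wSt(A)=0$. Your closing claim that all three generating families of $\wSt(A)$ arise as rearrangement errors in this axiom is likewise inaccurate: the families ${b}^{\left({b'}^{a}\right)}$ and ${b}^{\left({a}^{b'}\right)}-{b}^{a}$ are not used anywhere in this theorem; they are needed later, in Theorem \ref{theo:pentactderact}, to verify the derived-action conditions $a_{9}$ and $a_{10}$. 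So the plan is sound and largely parallel to the paper's, but as written the decisive verification is attached to the wrong coordinate and the stated expansion into $\wSt(A)$-generators would not materialize where you claim it does.
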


\begin{proof}
	First we have to show, that the addition defined in $\PA(A)$ is associative, the function $\bbz$ defined by us is the zero element in this set and there exists an opposite element for any $\bb\in\PA(A)$. After this we will show that the action conditions are satisfied in $\PA(A)$ and that we have two additional conditions as for reduced groups with actions presented in Section \ref{sec:prelim}.
	
	It is easy to see that the addition is associative in $\PA(A)$, i.e. $\left(\bb+\bb'\right)+\bb''=\bb+\left(\bb'+\bb''\right)$, for any $\bb,\bb',\bb''\in\PA(A)$. It is obvious that the zero element is the function $\bbz$ defined by us. For any $\bb\in\PA(A)$ the opposite element is defined by
	\begin{equation*}
		-\bb=\left(\left(-b\right)\cdot \text{- } , \text{-} \cdot \left(-b\right) , {(\text{-})}^{\left(-b\right)},{\vphantom{(\text{-})}}^{\left(-b\right)}{(\text{-})},{\left(-b\right)}^{(\text{-})}\right),
	\end{equation*}
	where
	\begin{alignat*}{2}
		\left(-b\right)\cdot \text{-} &= \text{-} \cdot b \\
		\text{-} \cdot \left(-b\right) &= b\cdot \text{-} \\
		{(\text{-})}^{\left(-b\right)} &= {\vphantom{(\text{-})}}^{b}{(\text{-})} \\
		{\vphantom{(\text{-})}}^{\left(-b\right)}{(\text{-})} &= {(\text{-})}^{b} \\
		{\left(-b\right)}^{(\text{-})} &= -\left[\left({b}^{(\text{-})}\right)\cdot b\right].
	\end{alignat*}
	It is easy to check that $\bb+\left(-\bb\right)=-\bb+\bb=\bbz$.
	
	Now we have to show that the addition defined by us in $\PA(A)$ satisfies action conditions in the category of groups.
	
	Therefore we have to show
	\begin{enumerate}[label={(\alph*)}]
		\item ${\left(\bb+\bb'\right)}^{\bb''}={\bb}^{\bb''}+{\bb'}^{\bb''}$
		\item ${\bb}^{\left(\bb'+\bb''\right)}={\left({\bb}^{\bb'}\right)}^{\bb''}$
		\item ${\bb}^{\bbz}=\bb$
	\end{enumerate}
	for any $\bb,\bb',\bb''\in\PA(A)$.
	\begin{enumerate}[label={(\alph*)}]
		\item We have ${\left(b+b'\right)}^{b''}\cdot a=a=\left({b}^{b''}+{b'}^{b''}\right)\cdot a$ for any ${\left(b+b'\right)}^{b''}\cdot \text{-}\in{\left(\bb+\bb'\right)}^{\bb''}$, any $\left({b}^{b''}+{b'}^{b''}\right)\cdot \text{-}\in{\bb}^{\bb''}+{\bb'}^{\bb''}$ and any $a\in A$.
		We have to show
		\begin{equation*}
			{a}^{\left({\left(b+b'\right)}^{b''}\right)}={a}^{\left({b}^{b''}+{b'}^{b''}\right)}.
		\end{equation*}
		The left side is equal to ${\left({a}^{b}\right)}^{b'}={a}^{\left({b}^{b''}+{b'}^{b''}\right)}$, which proves the equality. Moreover we have to show
		\begin{equation*}
			{\left({\left(b+b'\right)}^{b''}\right)}^{a}={\left({b}^{b''}+{b'}^{b''}\right)}^{a}.
		\end{equation*}
		The left side is equal to
		\begin{alignat*}{2}
			{\left({\left(b+b'\right)}^{(b''\cdot a)}\right)}^{b''} &= {\left({b}^{a}+b\cdot {b'}^{a}\right)}^{b''} \\
			&= {\left({b}^{a}\right)}^{b''}+{\left(b\cdot {b'}^{a}\right)}^{b''} \\
			&= {\left({b}^{a}\right)}^{b''}+{\left({b'}^{a}\right)}^{b''},
		\end{alignat*}
		here we applied Lemma \ref{lem:perfect} since $A$ is perfect.
		
		The right side is equal to
		\begin{alignat*}{2}
			{\left({b}^{b''}\right)}^{a}+{b}^{b''}\cdot {\left({b'}^{b''}\right)}^{a} &= {\left({b}^{b''}\right)}^{a}+{\left({b'}^{b''}\right)}^{a} \\
			&= {\left({b}^{a}\right)}^{b''}+{\left({b'}^{a}\right)}^{b''},
		\end{alignat*}
		here we applied Corollary \ref{cor:changeofpower} which proves the desired equality.
		
		The equalities for the dual actions are proved similarly.
		\item We have to show 
		\begin{equation*}
			{b}^{\left(b'+b''\right)}\cdot a = {\left({b}^{b'}\right)}^{b''}\cdot a,
		\end{equation*}
		for any ${b}^{\left(b'+b''\right)}\cdot \text{-}\in{\bb}^{\bb'+\bb''}$ and ${\left({b}^{b'}\right)}^{b''}\cdot \text{-}\in{\left({\bb}^{\bb'}\right)}^{\bb''}$. Both sides are equal to $a$ by the definition of action operation in $\PA(A)$. The equality
		\begin{equation*}
			{a}^{\left({b}^{\left(b'+b''\right)}\right)}={a}^{\left({\left({b}^{b'}\right)}^{b''}\right)}
		\end{equation*}
		holds by the definition of action operation ib $\PA(A)$.
		
		Now we have to show
		\begin{equation*}
			{\left({b}^{\left(b'+b''\right)}\right)}^{a}={\left({\left({b}^{b'}\right)}^{b''}\right)}^{a}.
		\end{equation*}
		Both sides are equal to ${\left({\left({b}^{\left(b'\cdot\left(b''\cdot a\right)\right)}\right)}^{b'}\right)}^{b''}$ without application of the condition that $A$ is perfect, which gives the desired equality.
		\item We have to show 
		\begin{equation*}
			{b}^{0}\cdot a = b\cdot a.
		\end{equation*}
		The left side is equal to $a$ by the definition of action operation in $\PA(A)$, and the right side is equal to $a$ as well by Lemma \ref{lem:perfect} since $A$ is perfect.
		\begin{equation*}
			{a}^{\left({b}^{0}\right)}={a}^{b}
		\end{equation*}
		by the definition of action ${\bb}^{\bbz}$.
		
		We have to show that
		\begin{equation*}
			{\left({b}^{0}\right)}^{a}={b}^{a}.
		\end{equation*} 
		The left side is equal to
		\begin{equation*}
			{\left({b}^{0\cdot a}\right)}^{a}={\left({b}^{a}\right)}^{0}={b}^{a},
		\end{equation*}
		which gives the equality. For the dual actions the corresponding equalities are proved similarly.
	\end{enumerate}
	Now we have to show that in $\PA(A)$ we have 
	\begin{enumerate}[label={(\roman{*})}]
		\item ${\bb}^{\bb'}+\bb''=\bb''+{\bb}^{\bb'}$ for $\bb'\neq\bbz$
		\item ${\bb}^{\left({\bb'}^{\bb''}\right)}={\bb}^{\bb'}$
	\end{enumerate}
	for any $\bb,\bb'$ and $\bb''\in\PA(A)$.
	\begin{enumerate}[label={(\roman{*})}]
		\item We have to show
		\begin{equation*}
			\left({b}^{b'}+b''\right)\cdot a = \left(b''+{b}^{b'}\right)\cdot a,
		\end{equation*}
		for any $\left({b}^{b'}+b''\right)\cdot \text{-}\in{\bb}^{\bb'}+\bb''$, any $\left(b''+{b}^{b'}\right)\cdot \text{-}\in\bb''+{\bb}^{\bb'}$ such that $\bb'\neq\bbz$, and any $a\in A$.
		
		The both sides are equal to $b''\cdot a$ by the definition, and the equality holds for any $A$, without being perfect.
		
		We have to show 
		\begin{equation*}
			{a}^{\left({b}^{b'}+b''\right)}={a}^{\left(b''+{b}^{b'}\right)}.
		\end{equation*}
		The left side is equal to 
		\begin{equation*}
			{\left({a}^{\left({b}^{b'}\right)}\right)}^{b''}={\left({a}^{b}\right)}^{b''}={a}^{b+b''}.
		\end{equation*}
		And the right side is equal to ${a}^{b''+b}$. Here we apply that $\wSt(A)=0$, which implies that these two elements are equal.
		
		Moreover we have to show
		\begin{equation*}
			{\left({b}^{b'}+b''\right)}^{a}={\left(b''+{b}^{b'}\right)}^{a}.
		\end{equation*}
		The left side is equal to
		\begin{equation*}
			{\left({b}^{b'}\right)}^{a}+{b}^{b'}\cdot \left({b''}^{a}\right)={\left({b}^{b'}\right)}^{a}+{b''}^{a},
		\end{equation*}
		since $A$ is perfect and the right side is equal to 
		\begin{equation*}
			{b''}^{a}+b''\cdot{\left({b}^{b'}\right)}^{a}={b''}^{a}+{\left({b}^{b'}\right)}^{a}.
		\end{equation*}
		Since $A$ is perfect these two elements commute and the corresponding sums are equal. For the dual actions the equalities are checked similarly. 
		
		Now we have to check equality (ii).
		\item For the dot action it is obvious 
		\begin{equation*}
			{b}^{\left({b'}^{b''}\right)}\cdot a={b}^{b'}\cdot a=a,
		\end{equation*}
		for any $a\in A$ and any elements from ${\bb}^{\left({\bb'}^{\bb''}\right)}$ and ${\bb}^{\bb'}$.
		
		We have to show 
		\begin{equation*}
			{a}^{\left({b}^{\left({b'}^{b''}\right)}\right)}={a}^{\left({b}^{b'}\right)}.
		\end{equation*}
		The both sides are equal to ${a}^{b}$ by the definition of action operation in $\PA(A)$. 
		
		It is left to show 
		\begin{equation*}
			{\left({b}^{\left({b'}^{b''}\right)}\right)}^{a}={\left({b}^{b'}\right)}^{a},
		\end{equation*}
		for any elements of ${\bb}^{\left({\bb'}^{\bb''}\right)}$ and ${\bb}^{\bb'}$, and for any $a\in A$. The left side is equal to
		\begin{equation*}
			{\left({b}^{\left(\left({b'}^{b''}\right)\cdot a\right)}\right)}^{\left({b'}^{b''}\right)} = {\left({b}^{a}\right)}^{\left({b'}^{b''}\right)} = {\left({b}^{a}\right)}^{b'}.
		\end{equation*}
		The right side is equal to 
		\begin{equation*}
			{\left({b}^{\left(b'\cdot a\right)}\right)}^{b'}={\left({b}^{a}\right)}^{b'},
		\end{equation*}
		by Corollary \ref{cor:changeofpower} since $A$ is perfect. For the dual actions the corresponding equalities are proved similarly, which ends the proof of Theorem \ref{theo:pentactrgwa}.
	\end{enumerate}
\end{proof}

\begin{example}
	Let $A$ be an abelian group with trivial action on itself, then $A\in\rGwa$. Denote by $A_0$ the abelian subgroup of $A$ generated by $\wSt(A)$ and denote the quotient group $A/A_0$ by $A_1$. Suppose $A_0\neq A$ and the natural epimorphism $A\rightarrow A_1$ has a section. Then there is an isomorphism $A\cong A_0\oplus A_1$. This is the case e.g. when $A$ is a vector space over a field and $A_0$ its subspace. For simplicity we identify $A$ with the direct sum $A_0\oplus A_1$. Any pentaction $\bb\in\PA(A_1)$ defines functions
	\begin{equation*}
		\tilde{b}\cdot \text{-} ~,~  {(\text{-})}^{\tilde{b}} ~,~  {\tilde{b}}^{(\text{-})} \colon A\rightarrow A.
	\end{equation*}
	in the following way
	\begin{alignat*}{2}
		\tilde{b}\cdot (a_0,a_1) &= (a_0,b\cdot a_1)\\
		{(a_0,a_1)}^{\tilde{b}} &= \left(a_0,{a_1}^{b}\right) \\
		{\tilde{b}}^{(a_0,a_1)} &= \left(0,{b}^{a_1}\right)
	\end{alignat*}
	for any $(a_0,a_1)\in A_0\oplus A_1$. Analogously for the dual actions. Easy checking shows that $\widetilde{\bb}=\left(\tilde{b}\cdot \text{-},\text{-}\cdot \tilde{b},{(\text{-})}^{\tilde{b}},{\vphantom{(-)}}^{\tilde{b}}{(\text{-})},{\tilde{b}}^{(\text{-})}\right)$ is a pentaction. Any element of $\wSt(A_1)$ defines an element of $\wSt(A_0\oplus A_1)$; e.g. for ${a_1}^{b+b'}-{a_1}^{b'+b}$ we have 
	\begin{equation*}
		{(0,a_1)}^{\tilde{b}+\tilde{b}'}-{(0,a_1)}^{\tilde{b}'+\tilde{b}}=\left(0,{a_1}^{b+b'}-{a_1}^{b'+b}\right),
	\end{equation*}
	from which we obtain that ${a_1}^{b+b'}-{a_1}^{b'+b}=0$ since $A_0=\left\langle \wSt(A) \right\rangle$.Other cases are considered similarly. Here it is worth to note that every object of $\rGwa$ with trivial action on itself is perfect.
\end{example}

\begin{remark}
The idea of constructing objects in $\rGwa$ with zero weak stabilizer leads us to the definition and study of Noetherian objects in this category. Note that Noetherian groups are already defined and studied nowadays.
\end{remark}

\section{Action of $\PA(A)$ on $A$ and action representability in $\rGwa$}\label{sec:actrep}

Let $A\in\rGwa$. An action of $\PA(A)$ on $A$ is defined in a natural way. For any $\bb=\left(b\cdot \text{- },\text{-}\cdot b,{(\text{-})}^{b},{\vphantom{(\text{-})}}^{b}{(\text{-})},{b}^{(\text{-})}\right)$ we define 
\begin{equation*}
	\bb\cdot a=b\cdot a,
\end{equation*}
which is the value of the function $b\cdot \text{-}\colon A\rightarrow A$ on the element $a\in A$. Analogously 
\begin{alignat*}{2}
	{a}^{\bb}&={a}^{b}\\
	{\bb}^{a}&={b}^{a}.
\end{alignat*}
By the definition of $-\bb$, we will have
\begin{alignat*}{2}
	(-\bb)\cdot a &= a\cdot b \\
	{a}^{-\bb} &= {\vphantom{a}}^{b}{a} \\
	{\left(-\bb\right)}^{a} &=-\left({b}^{a}\cdot b\right).
\end{alignat*}

\begin{theorem}\label{theo:pentactderact}
	The action of $\PA(A)$ on $A$ is a derived action in $\rGwa$ if $A$ is perfect and $\wSt(A)=0$, $A\in\rGwa$.
\end{theorem}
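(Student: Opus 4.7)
The plan is to verify the characterisation of derived actions in $\rGwa$ recalled in Section \ref{sec:prelim}, namely the group-action axioms for addition together with conditions $(1_{A})$--$(4_{A})$, $(1_{B})$--$(4_{B})$, ${a}^{0_{B}}=a$, and $a_{1}$--$a_{10}$. Because the action of $\PA(A)$ on $A$ is defined componentwise by $\bb\cdot a=b\cdot a$, ${a}^{\bb}={a}^{b}$ and ${\bb}^{a}={b}^{a}$, each of these conditions translates either into one of the twelve pentaction axioms of Definition \ref{def:pentcond} applied to a single $\bb\in\PA(A)$, or into one of the defining equalities \eqref{eq:bpb} and \eqref{eq:btob} governing $\bb+\bb'$ and ${\bb}^{\bb'}$.

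I would proceed condition by condition. The group-action axioms for $\bb\cdot a$, ${a}^{\bb}$ and ${\bb}^{a}$ with respect to addition in $\PA(A)$ follow immediately from \eqref{eq:bpb}, with associativity and existence of opposites already handled in Theorem \ref{theo:pentactrgwa}. Conditions $(1_{A})$, $(2_{A})$, $(1_{B})$ and $(2_{B})$ are pentaction conditions 2 and 4 combined with the defining formulas for ${a}^{b+b'}$ and ${(b+b')}^{a}$. Conditions $(3_{A})$, $(4_{A})$, $(3_{B})$ and $(4_{B})$ combine pentaction conditions 3 and 5 with Lemma \ref{lem:perfect}, which collapses $b\cdot a$ to $a$ in the perfect setting. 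The unit condition ${a}^{0_{B}}=a$ is immediate from the definition of $\bbz$. Conditions $a_{1}$--$a_{5}$, $a_{7}$ and $a_{8}$ are pentaction conditions 6 and 7 combined with the formulas in \eqref{eq:btob}, while $a_{9}$ and $a_{10}$ are forced directly by the hypothesis $\wSt(A)=0$, since the three defining generators of $\wSt(A)$ are precisely ${b}^{({b'}^{a})}$, ${b}^{({a}^{b'})}-{b}^{a}$ and ${a}^{b+b'}-{a}^{b'+b}$.

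The delicate step is condition $a_{6}$ together with its dual and the corresponding reduced-group commutativity axiom ${x}^{y}+z=z+{x}^{y}$ for $y\neq 0$. Pentaction condition 8 yields ${a}^{b}+a'=a'+{a}^{b}$ only when ${a}^{b}\neq a$ for at least one $a$, whereas the derived-action axiom demands the same equality for every $\bb\neq\bbz$. Here perfectness is essential: Lemma \ref{lem:perfect} and condition 11 force the two dot-components of any pentaction to be the identity, and condition 12 makes ${(\text{-})}^{b}$ and ${}^{b}{(\text{-})}$ mutually inverse functions, so $\bb\neq\bbz$ forces at least one of ${(\text{-})}^{b}$ or ${b}^{(\text{-})}$ to be nontrivial; in the former case condition 8 applies directly. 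The main obstacle is the residual case where only the fifth component ${b}^{(\text{-})}$ is nontrivial, and I expect this to be resolved by exploiting the vanishing of the third generator ${a}^{b+b'}-{a}^{b'+b}$ of $\wSt(A)$ after pairing $\bb$ with a suitably chosen $\bb'$ whose exponential component is itself nontrivial, so that the needed commutator is transported across an already-commutative pentaction.
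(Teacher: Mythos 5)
Your overall strategy is exactly the paper's: the proof there is a condition-by-condition verification reducing the group-action axioms, $\boldsymbol{(1_{A})}$--$\boldsymbol{(4_{A})}$, $\boldsymbol{(1_{B})}$--$\boldsymbol{(4_{B})}$, ${a}^{\bbz}=a$ and $a_{1}$--$a_{10}$ to the twelve pentaction axioms, the formulas \eqref{eq:bpb} and \eqref{eq:btob}, Lemma \ref{lem:perfect}, and the hypothesis $\wSt(A)=0$ for $a_{9}$ and $a_{10}$. Your dictionary is essentially correct (modulo a small slip: $a_{7}$ and $a_{8}$ come from pentaction conditions 9 and 10, not 6 and 7), and you rightly notice the one point where the translation is not literal, namely $a_{6}$: pentaction condition 8 carries the proviso ``${a}^{b}\neq a$ for at least one $a$'', while the derived-action axiom demands ${a}^{b}+a'=a'+{a}^{b}$ for \emph{every} $\bb\neq\bbz$, including pentactions whose exponent component ${(\text{-})}^{b}$ is the identity.

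However, your proposed resolution of that residual case is a genuine gap: it ends with ``I expect this to be resolved by exploiting the vanishing of the third generator of $\wSt(A)$,'' which is not a proof, and that generator cannot help here --- if ${(\text{-})}^{b}$ is the identity then ${a}^{b+b'}-{a}^{b'+b}={a}^{b'}-{a}^{b'}=0$ carries no information, and pairing with another pentaction $\bb'$ transports nothing back to the sum $a+a'$ you actually need to commute. The correct (and much simpler) observation, which the paper uses implicitly (see the treatment of condition 8 in the proof of Proposition \ref{prop:sumofpent}, ``always holds since $A$ is perfect''), is that perfectness already forces $A$ to be abelian: every element of $A={A}^{A}$ is a sum of elements of the form ${a}^{a'}$ with $a'\neq 0$ (note $-\left({a}^{a'}\right)={(-a)}^{a'}$ has the same form), and each such element is central by the defining $\rGwa$ identity ${x}^{y}+z=z+{x}^{y}$ for $y\neq 0$. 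Hence ${a}^{b}+a'=a'+{a}^{b}$ holds for all $a,a'$ and all components ${(\text{-})}^{b}$, with no case distinction and no appeal to condition 8 or to $\wSt(A)=0$. Replacing your last paragraph by this one-line argument closes the gap and makes your proof agree with the paper's.
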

\begin{proof}
	We have to check that group action conditions are satisfied, also conditions $\boldsymbol{(1_{A})}-\boldsymbol{(4_{A})}$, $\boldsymbol{(1_{B})}-\boldsymbol{(4_{B})}$, ${a}_{1}-{a}_{10}$ and ${a}^{\bbz}=a$, stated in Section \ref{sec:prelim}, where $a\in A$.
	
	Checking of group action conditions, 
	\begin{enumerate}[label={$\bullet$}]
		\item $\left(\bb+\bb'\right)\cdot a=\bb\cdot\left(\bb'\cdot a\right)$ is true by the definition of $\bb+\bb'$ and the definition of the action of pentactions on $A$.
		\item $\bb\cdot\left(a+a'\right)=\bb\cdot a+\bb\cdot a'$, since it is a property of a pentaction.
		\item $\bbz\cdot a=a$ is true by the definition of the pentaction $\bbz$ and the definition of action of pentactions on $A$.
	\end{enumerate}
	
	Checking of conditions $\boldsymbol{(1_{A})}-\boldsymbol{(4_{A})}$,
	\begin{enumerate}
		\item[$\boldsymbol{(1_{A})}$] This condition is true by the definition of pentaction, condition 2. 
		\item[$\boldsymbol{(2_{A})}$] This condition is true by the definition of the sum $\bb+\bb'$ of pentactions.
		\item[$\boldsymbol{(3_{A})}$] We have ${\left(\bb\cdot a\right)}^{a'}={a}^{a'}$ for any $a,a'\in A$, with $a'\neq 0$ by the definition of pentaction, condition 3.
		\item[$\boldsymbol{(4_{A})}$] ${\left(\bb\cdot a\right)}^{\bb'}={a}^{\bb'}$ is true by the definition of action of $\PA(A)$ on $A$ and Lemma \ref{lem:perfect}, since $A$ is perfect. 
	\end{enumerate}
	
	Checking of conditions $\boldsymbol{(1_{B})}-\boldsymbol{(4_{B})}$,
	\begin{enumerate}
		\item[$\boldsymbol{(1_{B})}$] This condition is true by the definition of pentaction, condition 4.
		\item[$\boldsymbol{(2_{B})}$] This condition is true by the definition of the sum $\bb+\bb'$.
		\item[$\boldsymbol{(3_{B})}$] This condition is true by the definition of pentaction, property 5.
		\item[$\boldsymbol{(4_{B})}$] This condition is true by the definition of ${\bb}^{\bb'}$.
	\end{enumerate}
	Obviously ${a}^{\bbz}=a$, for any $a\in A$.
	
	Checking of the conditions ${a}_{1}-{a}_{10}$. 
	
	\begin{enumerate}[label={${a}_{\arabic{*}}$.}]
		\item Condition ${a}_{1}$ is true by Definition \ref{def:pentcond}, property 6.
		\item This condition is true since $A$ is perfect.
		\item This condition is true by the definition of ${\bb}^{\bb'}$.
		\item This condition is true by Definition \ref{def:pentcond}, property 7.
		\item This condition is true by the definition of ${\bb}^{\bb'}$.
		\item This condition is true by Definition \ref{def:pentcond}, condition 8.
		\item This condition is true by Definition \ref{def:pentcond}, condition 9.
		\item This condition is true by Definition \ref{def:pentcond}, condition 10.
		\item ${\bb}^{\left({\bb'}^{a}\right)}=0$ is true since $\wSt(A)=0$.
		\item ${\bb}^{\left({a}^{\bb'}\right)}={\bb}^{a}$ is true since $\wSt(A)=0$.
	\end{enumerate}
	Therefore all conditions of derived actions in $\rGwa$ are satisfied, which proves the theorem.
\end{proof}

\begin{theorem}\label{theo:pentactrepresent}
	Let $A\in\rGwa$. If $A$ is perfect and $\wSt(A)=0$, then $A$ is action representable and \begin{equation*}
		\AR(A)=\PA(A).
	\end{equation*}
\end{theorem}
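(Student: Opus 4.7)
The plan is as follows. Theorems \ref{theo:pentactrgwa} and \ref{theo:pentactderact} already establish that $\PA(A)$ is an object of $\rGwa$ and that the canonical action of $\PA(A)$ on $A$ defined in the previous section is a derived action. What remains is the universal property: for any $B\in\rGwa$ carrying a derived action on $A$, I must produce a unique morphism $\varphi\colon B\to\PA(A)$ in $\rGwa$ satisfying \eqref{repact}.

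I would define $\varphi$ element-wise by
\begin{equation*}
\varphi(b)=\bigl(b\cdot\text{-},\ \text{-}\cdot b,\ (\text{-})^{b},\ {}^{b}(\text{-}),\ b^{(\text{-})}\bigr),
\end{equation*}
where the three ``outer'' operations $b\cdot a$, $a^{b}$, $b^{a}$ are the three derived actions of $B$ on $A$, and the two ``dual'' operations are the inverses forced by conditions 11 and 12 of Definition \ref{def:pentcond}, namely $a\cdot b:=(-b)\cdot a$ and ${}^{b}a:=a^{-b}$. This is the only assignment compatible with \eqref{repact}, so uniqueness of $\varphi$ is immediate.

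Next, I would check $\varphi(b)\in\PA(A)$. The twelve conditions of Definition \ref{def:pentcond} translate one-to-one into the derived-action axioms recalled in Section \ref{sec:prelim}: the unprimed conditions $1$--$10$ are exactly $\boldsymbol{(1_{A})}$--$\boldsymbol{(4_{B})}$ together with $a_{1}$--$a_{10}$, the primed variants follow by running the same identities with $-b$ in place of $b$ (using that derived actions of $-b$ are inverse to those of $b$), and conditions $11$, $12$ hold by the very definition of the two dual operations.

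I would then verify that $\varphi$ preserves the two operations of $\rGwa$. The identity $\varphi(b+b')=\varphi(b)+\varphi(b')$ amounts to comparing each of the five coordinates with \eqref{eq:bpb}; each comparison is a restatement of a derived-action axiom (for instance the fifth coordinate is precisely $\boldsymbol{(2_{A})}$ and the third uses the group-action condition ${a}^{b+b'}=(a^{b})^{b'}$). The identity $\varphi(b^{b'})=\varphi(b)^{\varphi(b')}$, matched against \eqref{eq:btob}, is the main obstacle: one has to recognize that $(b^{b'})\cdot a=a$, $a^{b^{b'}}=a^{b}$ and $(b^{b'})^{a}=(b^{b'\cdot a})^{b'}$, viewed as operations of $B$ on $A$, coincide with the slots prescribed by \eqref{eq:btob}. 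This uses the identities $a_{3}$, $a_{5}$ and $\boldsymbol{(4_{B})}$, together with perfectness of $A$ and $\wSt(A)=0$, exactly in the manner employed in the proof of Theorem \ref{theo:pentactrgwa}. Once these compatibilities are in place, \eqref{repact} holds tautologically from the construction of $\varphi$ and the definition of the action of $\PA(A)$ on $A$, so $\PA(A)$ represents all actions on $A$.
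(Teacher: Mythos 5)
Your proposal is correct and follows essentially the same route as the paper: define $\varphi(b)$ coordinatewise from the three derived actions of $b$ plus the two dual operations $(-b)\cdot a$ and $a^{-b}$, check it lands in $\PA(A)$ and is a morphism of $\rGwa$, and read off both \eqref{repact} and uniqueness from the construction. The paper compresses the verification steps into ``easy checking,'' whereas you spell out which derived-action axioms match which pentaction conditions, but the argument is the same.
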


\begin{proof}
	Let $B\in\rGwa$ which has a derived action on $A$. Define a function 
	\begin{equation*}
		\varphi\colon B\rightarrow \PA(A)
	\end{equation*}
	in the following way. For any $b\in B$
	\begin{equation*}
		\varphi(b)=\left(b\cdot \text{- }, \text{-}\cdot b, {(\text{-})}^{b}, {\vphantom{(\text{-})}}^{b}{(\text{-})}, {b}^{(\text{-})}\right),
	\end{equation*}
	where $b\cdot a$ is an action of $b$ on $a$; $a\cdot b$ is an action of $-b$ on $a$, i.e. $a\cdot b=(-b)\cdot a$; ${a}^{b}$ is an action of $b$ on $a$; ${\vphantom{a}}^{b}{a}={a}^{-b}$, where on the right side is the action of $-b$ on $a$; ${b}^{a}$ is an action of $b$ on $a$, for any $a\in A$. Easy checking shows that $\varphi(b)$ is a pentaction of $A$; $\varphi$ is a homomorphism of $\rGwa$, it follows from the definition of the operations in $\PA(A)$ and the construction of $\varphi$. We have
	\begin{equation}\label{eq:uniquefunc}
		\begin{alignedat}{2}
			b\cdot a &= \varphi(b)\cdot a \\
			{a}^{b} &= {a}^{\varphi(b)} \\
			{b}^{a} &= {\varphi(b)}^{a}.
		\end{alignedat}
	\end{equation}
	$\varphi$ is unique function with properties \eqref{eq:uniquefunc}, which follows from the definition of pentactions and conditions \eqref{eq:uniquefunc}.
\end{proof}

\section*{Acknowledgements}

It was the second author's idea to define and describe actions in the category of groups with action. It was stimulating and then interesting to search such a subcategory in this category, where it would be possible to investigate action representability of certain objects.


\end{document}